\newtheorem{theorem}{Theorem}[section]
\newtheorem{lemma}[theorem]{Lemma}
\newtheorem{example}[theorem]{Example}
\newtheorem{corollary}[theorem]{Corollary}
\title{Properties of the symmetric difference in lattices with complementation}
\author{V\'aclav~Cenker, Ivan~Chajda and Helmut~L\"anger}
\date{}
\begin{document}

\footnotetext{Support of the research of the first author by the Czech Science Foundation (GA\v CR), project 24-14386L, support of the research of the first two authors by IGA, project P\v rF~2025~008, support of the research of the second author by the Czech Science Foundation (GA\v CR), project 25-20013L, and support of the research of the third author by the Austrian Science Fund (FWF), project 10.55776/PIN5424624, is gratefully acknowledged.}

\maketitle
	
\begin{abstract}
The symmetric difference in Boolean lattices can be defined in two different but equivalent forms. However, it can be introduced also in every bounded lattice with complementation where these two forms need not coincide. We study lattices with complementation and the variety of such lattices where these two expressions coincide and point out explicitly some interesting subvarieties. Using a result of J.~Berman we estimate the size of free algebras in these subvarieties. It is well-known that the symmetric difference is associative in every Boolean lattice. We prove that it is just the property of Boolean lattices, namely the symmetric difference in a lattice with complementation is associative if and only if this lattice is Boolean. Similarly, we prove that a lattice with complementation is Boolean if and only if the symmetric difference satisfies a certain simple identity in two variables. We also characterize lattices with a unary operation satisfying De Morgan's laws.
\end{abstract}
	
{\bf AMS Subject Classification:} 06D05, 06B05, 06B20, 06F25, 06C15, 06E20
	
{\bf Keywords:} Complemented lattice, lattice with complementation, symmetric difference, De Morgan's laws, distributivity, associativity, variety of lattices with complementation, free algebra

\section{Introduction}
The aim of this paper is to show several important properties of the symmetric difference in complemented lattices where the complementation is a unary operation. Because the symmetric difference can be defined in two different ways which coincide in Boolean lattices, we are interested in lattices which need not be distributive but these two forms of symmetric difference coincide. It is well-known that in a Boolean algebra the symmetric difference is associative. We show that also, conversely, associativity of the symmetric difference yields distributivity of the complemented lattice in question.

We start with definitions of several basic concepts.

Let $\mathbf L=(L,\vee,\wedge,0,1)$ be a bounded lattice and $a\in L$. An element $b$ of $L$ is called a {\em complement} of $a$ if $a\vee b=1$ and $a\wedge b=0$. The {\em lattice} $\mathbf L$ is called {\em complemented} if every element of $L$ has a complement which need not be unique. Let $\,'$ be a unary operation on $L$. Then $\,'$ is called 
\begin{enumerate}[(i)]
\item {\em antitone} if, for all $x,y\in L$, $x\le y$ implies $y'\le x'$,
\item an {\em involution} if $x''=x$ for all $x\in L$,
\item a {\em complementation} if, for every $x\in L$, $x\vee x'=1$ and $x\wedge x'=0$.
\end{enumerate}
In (iii), $x'$ is a complement of $x$ for every $x\in L$. The algebra $\mathbf L=(L,\vee,\wedge,{}',0,1)$ where $\,'$ is a complementation will be called a {\em lattice with complementation}. If, moreover, $\,'$ is an antitone involution then $\mathbf L$ is called an {\em ortholattice}.

At first we prove an auxiliary result using identites similar to those studied in \cite{CLP} and \cite{CP}.

\begin{lemma}\label{lem2}
Let $\mathbf L=(L,\vee,\wedge,{}',0,1)$ be a lattice with complementation satisfying one of the following identities:
\begin{enumerate}[{\rm(i)}]
\item $x\wedge y\approx x\wedge(x'\vee y)$,
\item $x\vee y\approx x\vee(x'\wedge y)$.
\end{enumerate}
Then $\mathbf L$ satisfies the identity $x''\approx x$.	
\end{lemma}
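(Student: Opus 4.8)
The plan is to treat identity~(i) in detail and then obtain~(ii) by the order-dual argument, interchanging $\vee$ with $\wedge$ and $0$ with $1$ and reversing inequalities; since~(ii) is precisely the dual of~(i) and the target identity $x''\approx x$ is self-dual, this costs nothing extra. So assume that $\mathbf L$ satisfies $x\wedge y\approx x\wedge(x'\vee y)$. The first step is to substitute $y:=x''$ and use that $x''$ is a complement of $x'$, so that $x'\vee x''=1$; this gives $x\wedge x''=x\wedge(x'\vee x'')=x\wedge 1=x$, that is, $x\le x''$ for all $x\in L$. This is the easy half, and it uses only the one complementation law $x'\vee x''=1$.

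Applying the inequality just obtained to $x'$ in place of $x$ yields $x'\le x'''$, and hence $x\vee x'''\ge x\vee x'=1$, so $x\vee x'''=1$. Now I substitute $x:=x''$ and $y:=x$ into~(i): the left-hand side is $x''\wedge x$, which equals $x$ because $x\le x''$, while the right-hand side is $x''\wedge(x'''\vee x)=x''\wedge 1=x''$ because $x\vee x'''=1$. Comparing the two sides gives $x=x''$, as required.

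The one genuinely delicate point is the reverse inequality $x''\le x$. It cannot be read off directly from $x\le x''$: both $x\wedge x''$ and $x\vee x''$ simplify in ways that merely restate $x\le x''$, so every naive attempt to extract $x''\le x$ turns out to be circular. The decisive idea is to first pass to the third iterate and establish $x\vee x'''=1$, and only then feed $x''$ back into the identity; it is this reintroduction of $x'''$ that breaks the one-sidedness of $x\le x''$ and forces equality. All remaining manipulations use nothing beyond the complementation laws $x\vee x'=1$ and $x\wedge x'=0$, so neither distributivity nor antitonicity of $\,'$ is invoked, which is exactly what makes this lemma a convenient starting point for the subsequent results.
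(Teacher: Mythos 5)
Your proof is correct and follows essentially the same route as the paper's: establish $x\wedge x''\approx x$ by substituting $y:=x''$ into (i), deduce $x'\le x'''$ and hence $x\vee x'''=1$, then substitute $x:=x''$, $y:=x$ into (i) to conclude $x''\approx x$, with (ii) handled by duality. The only difference is cosmetic — you phrase the intermediate steps as inequalities where the paper uses equational absorption identities.
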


\begin{proof}
Assume (i). Then
\begin{equation}\label{equ10}
x\wedge x''\approx x\wedge(x'\vee x'')\approx x\wedge1\approx x.
\end{equation}
Hence $x'\approx x'\wedge x''$ and
\[
x'''\vee x'\approx x'''\vee(x'\wedge x''')\approx x'''
\]
whence
\[
x\vee x'''\approx x\vee x'''\vee x'\approx1.
\]
Using this, (\ref{equ10}) and (i) we obtain
\[
x\approx x\wedge x''\approx x''\wedge x\approx x''\wedge(x'''\vee x)\approx x''\wedge1\approx x''.
\]
By dualizing the preceding proof we see that (ii) implies $x''\approx x$.		
\end{proof}

It is well-known (see e.g.\ \cite D) that in a distributive bounded lattice every element has at most one complement. A distributive lattice with complementation is an ortholattice which is called a {\em Boolean lattice}. Hence in Boolean lattices $\mathbf L$, for every $x\in L$, $x'$ is the unique complement of $x$.

It is evident that if $(L,\vee,\wedge,{}',0,1)$ is a lattice with complementation and $a\in L$ then $a=0$ if and only if $a'=1$ and, conversely, $a=1$ if and only if $a'=0$.

In every bounded lattice $\mathbf L=(L,\vee,\wedge,{}',0,1)$ with a unary operation $\,'$ one can introduce two so-called {\em symmetric differences}, i.e.\ the term operations
\begin{align*}
x+_1y & :=(x'\wedge y)\vee(x\wedge y'), \\
x+_2y & :=(x\vee y)\wedge(x'\vee y').
\end{align*}
which coincide if $\mathbf L$ is a Boolean lattice, see e.g.\ \cite{B79}. The question is whether this holds only in Boolean lattices. Hence, in what follows, we are interested in lattices with complementation satisfying the identity
\begin{equation}\label{equ1}
(x'\wedge y)\vee(x\wedge y')\approx(x\vee y)\wedge(x'\vee y').
\end{equation}
We call this identity the {\em coincidence identity}. Let us note that varieties of lattices with complementation where the complementation satisfies certain adjointness properties were already studied by the authors in \cite{CCL}.

\begin{example}\label{ex4}
Consider the modular lattice $\mathbf M_3=(M_3,\vee,\wedge)$ depicted in Fig.~1.

\vspace*{-4mm}

\begin{center}
\setlength{\unitlength}{7mm}
\begin{picture}(6,6)
\put(3,1){\circle*{.3}}
\put(1,3){\circle*{.3}}
\put(3,3){\circle*{.3}}
\put(5,3){\circle*{.3}}
\put(3,5){\circle*{.3}}
\put(3,1){\line(-1,1)2}
\put(3,1){\line(0,1)4}
\put(3,1){\line(1,1)2}
\put(3,5){\line(-1,-1)2}
\put(3,5){\line(1,-1)2}
\put(2.85,.3){$0$}
\put(.35,2.85){$a$}
\put(3.4,2.85){$b$}
\put(5.4,2.85){$c$}
\put(2.85,5.4){$1$}
\put(2.2,-.75){{\rm Fig.~1}}
\put(.4,-1.75){{\rm Modular lattice $\mathbf M_3$}}
\end{picture}
\end{center}

\vspace*{10mm}

There are exactly six possibilities how to define a complementation $\,'$ on $\mathbf M_3$. 

Each of these complementations is antitone, but none is an involution. One such complementation is given by
\[
\begin{array}{l|ccccc}
x  & 0 & a & b & c & 1 \\
\hline
x' & 1 & b & c & a & 0
\end{array}\]
We have
\begin{align*}
a+_1b & =(a'\wedge b)\vee(a\wedge b')=(b\wedge b)\vee(a\wedge c)=b\vee0=b\ne1=1\wedge(b\vee c)= \\
      & =(a\vee b)\wedge(a'\vee b')=a+_2b,
\end{align*}
i.e., the lattice $(M_3,\vee,\wedge,{}',0,1)$ with complementation does not satisfy identity {\rm(\ref{equ1})} {\rm(}cf.\ Corollary~\ref{cor1}{\rm)}.
\end{example}

For lattices with complementation satisfying identity (\ref{equ1}) we can show the following result.

\begin{lemma}
Let $\mathbf L=(L,\vee,\wedge,{}',0,1)$ be a lattice with complementation satisfying identity {\rm(\ref{equ1})} and let $a\in L$. Then the following hold:
\begin{enumerate}[{\rm(i)}]
\item $\mathbf L$ satisfies the identity $x'\vee(x\wedge x'')\approx1$,
\item $a\wedge a''\le a'$ if and only if $a=0$.
\end{enumerate}	
\end{lemma}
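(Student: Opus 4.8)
The plan is to obtain (i) by a single well-chosen substitution into the coincidence identity, and then to deduce (ii) from (i) together with the elementary observation recorded just before Example~\ref{ex4}.

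For part (i), I would substitute $y:=x'$ into identity~(\ref{equ1}). The left-hand side $(x'\wedge x')\vee(x\wedge x'')$ collapses to $x'\vee(x\wedge x'')$, using $x'\wedge x'=x'$ and $(x')'=x''$. The right-hand side $(x\vee x')\wedge(x'\vee x'')$ becomes $1\wedge1=1$, because the complementation property yields $x\vee x'=1$ (applied to $x$) and $x'\vee x''=1$ (applied to $x'$). Equating the two sides gives exactly $x'\vee(x\wedge x'')\approx1$, which is (i).

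For part (ii), the \emph{if} direction is a direct computation: if $a=0$ then $a'=1$ and $a''=0$, so $a\wedge a''=0\le1=a'$. For the \emph{only if} direction, I would assume $a\wedge a''\le a'$; then $a'\vee(a\wedge a'')=a'$, since in any lattice joining an element with something below it returns that element. Comparing this with the instance of (i) at $x=a$, namely $a'\vee(a\wedge a'')=1$, forces $a'=1$, and the observation that $a=0$ if and only if $a'=1$ then gives $a=0$.

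The only genuine idea here is the substitution $y:=x'$, which makes the right-hand side of the coincidence identity collapse to $1$ via the complementation law applied to both $x$ and $x'$; everything else is routine lattice manipulation, so I anticipate no real obstacle.
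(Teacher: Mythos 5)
Your proposal is correct and matches the paper's own argument: part (i) is exactly the substitution $y:=x'$ into the coincidence identity, and part (ii) is the same chain $a\wedge a''\le a'\Leftrightarrow a'\vee(a\wedge a'')=a'\Leftrightarrow a'=1\Leftrightarrow a=0$ that the paper records as a sequence of equivalences. No gaps; the only cosmetic difference is that you split (ii) into two directions while the paper states it as one equivalence chain.
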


\begin{proof}
\
\begin{enumerate}[(i)]
\item Identity (\ref{equ1}) implies $x'\vee(x\wedge x'')\approx(x'\wedge x')\vee(x\wedge x'')\approx(x\vee x')\wedge(x'\vee x'')\approx1\wedge1\approx1$.
\item Because of (i) the following are equivalent $a\wedge a''\le a'$, $a'\vee(a\wedge a'')=a'$, $a'=1$, $a=0$.
\end{enumerate}
\end{proof}

Recall that the identities $(x\vee y)'\approx x'\wedge y'$ and $(x\wedge y)'\approx x'\vee y'$ for lattices $(L,\vee,\wedge,{}')$ with a unary operation $\,'$ are called {\em De Morgan's laws}.

\section{Lattices and varieties of lattices satisfying the coincidence identity for symmetric differences}

The following lemma presents two identities holding in any ortholattice satisfying identity (\ref{equ1}).

\begin{lemma}
Let $(L,\vee,\wedge,{}',0,1)$ be an ortholattice satisfying identity {\rm(\ref{equ1})}. Then it satisfies the identities
\begin{align*}
(x\wedge y)\vee(x\wedge y')\vee(x'\wedge y)\vee(x'\wedge y') & \approx1, \\
  (x\vee y)\wedge(x\vee y')\wedge(x'\vee y)\wedge(x'\vee y') & \approx0.
\end{align*}
\end{lemma}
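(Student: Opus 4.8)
The plan is to prove the first identity and then obtain the second one for free by duality. Indeed, applying the unary operation $'$ to the first identity and using De Morgan's laws (which hold in any ortholattice, since there $'$ is an antitone involution) turns the left-hand side $(x\wedge y)\vee(x\wedge y')\vee(x'\wedge y)\vee(x'\wedge y')$ into $(x'\vee y')\wedge(x'\vee y)\wedge(x\vee y')\wedge(x\vee y)$, and the right-hand side $1$ into $0$; after reordering the meetands this is exactly the second identity. So it suffices to establish the first one.

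For the first identity, the key observation is that its left-hand side is a join of two copies of the symmetric difference. Writing $d(x,y):=(x'\wedge y)\vee(x\wedge y')$ and using the involution $y''\approx y$, the two terms $(x\wedge y)\vee(x'\wedge y')$ are precisely $d(x,y')$, so the whole left-hand side equals $d(x,y)\vee d(x,y')$. I would then establish the self-complementation identity $d(x,y)'\approx d(x,y')$: by De Morgan one computes $d(x,y)'=(x\vee y')\wedge(x'\vee y)$, and this is exactly the second form of the symmetric difference $d(x,y')$, which by the coincidence identity~(\ref{equ1}) coincides with the first form $d(x,y')$. Hence the left-hand side is $d(x,y)\vee d(x,y)'$, and since $'$ is a complementation this equals $1$.

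Thus the whole argument reduces to two facts: that the four meets regroup as $d(x,y)\vee d(x,y')$, and that $d(x,y')\approx d(x,y)'$. The main (and really the only) obstacle is spotting this regrouping together with the self-complementation identity; everything after that is a one-line appeal to $a\vee a'\approx1$. It is worth recording exactly where the hypotheses enter: the ortholattice structure (equivalently, De Morgan's laws and the involution) is used both for the duality reducing the second identity to the first and for computing $d(x,y)'$, while the coincidence identity~(\ref{equ1}) is what identifies $d(x,y)'$ with $d(x,y')$. Without~(\ref{equ1}) the two symmetric differences need not agree and the expression need not reach $1$, consistent with the behaviour already seen in Example~\ref{ex4}.
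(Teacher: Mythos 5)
Your proof is correct and follows essentially the same route as the paper: the paper writes the left-hand side as $(x+_1y)\vee(x+_2y)'$ and uses identity~(\ref{equ1}) to turn it into $(x+_1y)\vee(x+_1y)'\approx1$, which is exactly your computation with $d(x,y)\vee d(x,y)'$, just with the De Morgan step and the appeal to~(\ref{equ1}) performed in the opposite order (you identify the last two joinands with $d(x,y')=x+_1y'$ and then show $d(x,y)'=x+_2y'\approx x+_1y'$). Your derivation of the second identity by applying~$'$ to the first is a harmless variant of the paper's one-line appeal to duality.
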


\begin{proof}
We have
\[
(x\wedge y)\vee(x\wedge y')\vee(x'\wedge y)\vee(x'\wedge y')\approx(x+_1y)\vee(x+_2y)'\approx(x+_1y)\vee(x+_1y)'\approx1.	
\]
The second identity is dual to the first one.
\end{proof}

In the previous example, every of the elements $a,b,c$ has two incomparable complements which, moreover, are complements of each other. We can show that there exist lattices satisfying identity (\ref{equ1}) having an element with two incomparable complements that are not complements of each other, see the following example.

\begin{example}
In Fig.~2 and 3 ortholattices satisfying identity {\rm(\ref{equ1})} are visualized:
	
\vspace*{-4mm}
	
\begin{center}
\setlength{\unitlength}{7mm}
\begin{picture}(16,10)
\put(2,1){\circle*{.3}}
\put(0,3){\circle*{.3}}
\put(4,3){\circle*{.3}}
\put(-1,5){\circle*{.3}}
\put(1,5){\circle*{.3}}
\put(3,5){\circle*{.3}}
\put(5,5){\circle*{.3}}
\put(0,7){\circle*{.3}}
\put(4,7){\circle*{.3}}
\put(2,9){\circle*{.3}}
\put(13,1){\circle*{.3}}
\put(11,3){\circle*{.3}}
\put(13,3){\circle*{.3}}
\put(15,3){\circle*{.3}}
\put(11,5){\circle*{.3}}
\put(13,5){\circle*{.3}}
\put(15,5){\circle*{.3}}
\put(13,7){\circle*{.3}}
\put(14,2){\circle*{.3}}
\put(12,6){\circle*{.3}}
\put(2,1){\line(-1,1)2}
\put(2,1){\line(1,1)2}
\put(0,3){\line(-1,2)1}
\put(0,3){\line(1,2)1}
\put(4,3){\line(-1,2)1}
\put(4,3){\line(1,2)1}
\put(0,7){\line(-1,-2)1}
\put(0,7){\line(1,-2)1}
\put(4,7){\line(-1,-2)1}
\put(4,7){\line(1,-2)1}
\put(2,9){\line(-1,-1)2}
\put(2,9){\line(1,-1)2}
\put(13,1){\line(-1,1)2}
\put(13,1){\line(0,1)2}
\put(13,1){\line(1,1)2}
\put(13,3){\line(-1,1)2}
\put(13,3){\line(1,1)2}
\put(13,5){\line(-1,-1)2}
\put(13,5){\line(1,-1)2}
\put(13,7){\line(-1,-1)2}
\put(13,7){\line(0,-1)2}
\put(13,7){\line(1,-1)2}
\put(11,3){\line(0,1)2}
\put(15,3){\line(0,1)2}
\put(1.85,.3){$0$}
\put(-.65,2.85){$a$}
\put(4.35,2.85){$b$}
\put(-1.65,4.85){$c$}
\put(1.35,4.85){$d$}
\put(2.35,4.85){$d'$}
\put(5.35,4.85){$c'$}
\put(-.65,6.85){$b'$}
\put(4.35,6.85){$a'$}
\put(1.85,9.4){$1$}
\put(12.85,.3){$0$}
\put(10.35,2.85){$b$}
\put(15.35,2.85){$d$}
\put(13.35,2.85){$c$}
\put(14.35,1.85){$a$}
\put(10.35,4.85){$d'$}
\put(15.35,4.85){$b'$}
\put(13.35,4.85){$c'$}
\put(12.85,7.4){$1$}
\put(11.35,5.85){$a'$}
\put(1.2,-.75){{\rm Fig.~2}}
\put(-2.2,-1.75){{\rm Ortholattice satisfying identity (\ref{equ1})}}
\put(12.2,-.75){{\rm Fig.~3}}
\put(9,-1.75){{\rm Ortholattice satisfying identity (\ref{equ1})}}
\end{picture}
\end{center}
	
\vspace*{10mm}

As one can see, in the ortholattice depicted in Fig.~2 the elements $c'$ and $d'$ are incomparable complements of $c$, but $c'$ and $d'$ are not complements of each other whereas in the ortholattice visualized in Fig.~3 the elements $a'$ and $d'$ are comparable complements of $a$. Moreover, the first ortholattice is not subdirectly irreducible while the second is. The congruence monolith of the second ortholattice contains $\{a,d\}$ and $\{d',a'\}$ as non-trivial classes.
\end{example}

However, the situation shown in Example~\ref{ex4} can be generalized as follows.

\begin{theorem}
Let $(L,\vee,\wedge,{}',0,1)$ be a non-trivial lattice with complementation satisfying identity {\rm(\ref{equ1})} and let $a\in L$. Then there does not exist some $b\in L$ being a complement of $a$ and $a'$.
\end{theorem}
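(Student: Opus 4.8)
The plan is to argue by contradiction. Suppose that for the fixed $a$ there is an element $b\in L$ that is simultaneously a complement of $a$ and of $a'$. Writing out the four defining equalities, this means
\[
a\wedge b=0,\quad a\vee b=1,\quad a'\wedge b=0,\quad a'\vee b=1.
\]
The strategy is to feed these into the coincidence identity (\ref{equ1}) and show that the only way it can hold is $0=1$, contradicting the assumed non-triviality of $\mathbf L$.

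The key step is the substitution $x:=a$, $y:=b$ in (\ref{equ1}), which reads $(a'\wedge b)\vee(a\wedge b')\approx(a\vee b)\wedge(a'\vee b')$. Here the hypotheses collapse two of the four terms: since $a'\wedge b=0$ the left-hand side reduces to $a\wedge b'$, and since $a\vee b=1$ the right-hand side reduces to $a'\vee b'$. Thus the coincidence identity forces the single equation $a\wedge b'=a'\vee b'$. I would then read off an order relation by sandwiching: the common value lies below $a$ (being $a\wedge b'$) and above $a'$ (being $a'\vee b'$), so $a'\le a$.

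From $a'\le a$ the contradiction is quick. Because $\,'$ is a complementation we have $a\wedge a'=0$; combined with $a'\le a$ this yields $a'=a\wedge a'=0$, hence $a=1$ by the elementary equivalence (recorded before Example~\ref{ex4}) that $a'=0$ if and only if $a=1$. But now the two complement conditions on $b$ become incompatible: $b=a\wedge b=0$ while $b=a'\vee b=1$, so $0=1$, contradicting non-triviality.

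I expect the only genuinely non-routine point to be spotting that the single substitution $x=a$, $y=b$ already suffices: the hypotheses annihilate exactly the terms $a'\wedge b$ and $a\vee b$, turning the coincidence identity into the clean equation $a\wedge b'=a'\vee b'$, from which the sandwiching $a'\le a$ drops out immediately. Everything after that is forced. I would also double-check that neither distributivity nor uniqueness of complements is used anywhere — the argument relies solely on the complementation law $a\wedge a'=0$, the collapse of terms afforded by the hypotheses on $b$, and the equivalence $a'=0\Leftrightarrow a=1$.
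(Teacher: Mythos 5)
Your proof is correct and follows essentially the same route as the paper: both substitute $x=a$, $y=b$ into the coincidence identity, use the four complement equations to collapse it, deduce $a=1$ (you via the sandwich $a'\le a\wedge b'=a'\vee b'\le a$ giving $a'=0$, the paper by joining the identity with $a$), and then obtain the contradiction $0=b=1$. The difference is only cosmetic, so nothing further is needed.
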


\begin{proof}
If there would exist some $b\in L$ being a complement of $a$ and $a'$ then using (\ref{equ1}) we would obtain
\begin{align*}
a & =a\vee0\vee(a\wedge b')=a\vee(a'\wedge b)\vee(a\wedge b')=a\vee\big((a\vee b)\wedge(a'\vee b')\big)= \\
  & =a\vee\big(1\wedge(a'\vee b')\big)=a\vee a'\vee b'=1
\end{align*}
and hence
\[
0=b\wedge a=b\wedge1=b=b\vee0=b\vee(a'\wedge a)=b\vee(a'\wedge1)=b\vee a'=1,
\]
a contradiction.
\end{proof}

\begin{corollary}\label{cor1}
A lattice $(L,\vee,\wedge,{}',0,1)$ with complementation satisfying identity {\rm(\ref{equ1})} cannot contain a subalgebra isomorphic to $(M_3,\vee,\wedge,{}',0,1)$.
\end{corollary}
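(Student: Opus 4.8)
The plan is to argue by contradiction, deriving a contradiction directly from the preceding theorem, which forbids, in a non-trivial lattice with complementation satisfying (\ref{equ1}), the existence of an element that is a complement of both $a$ and $a'$. So I would suppose that $(L,\vee,\wedge,{}',0,1)$ does contain a subalgebra $\mathbf M$ isomorphic to $(M_3,\vee,\wedge,{}',0,1)$ and produce such a forbidden common complement.

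First I would record the structural facts about this subalgebra. Since $\mathbf M\cong M_3$ as an algebra with complementation, the isomorphism carries the bottom and top of $M_3$ to $0$ and $1$ and the three middle elements to three elements $a_1,a_2,a_3\in L$. In $M_3$ any two distinct middle elements $x,y$ satisfy $x\vee y=1$ and $x\wedge y=0$; because the isomorphism preserves $\vee$, $\wedge$, $0$ and $1$, the same holds for $a_1,a_2,a_3$, so these three elements are pairwise complements of one another. Moreover the isomorphism preserves $\,'$, and since the complement of a middle element of $M_3$ can be neither $0$ nor $1$, the operation $\,'$ restricted to $\mathbf M$ maps $\{a_1,a_2,a_3\}$ into itself.

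Next I would invoke the theorem. Because $\mathbf M$ has five elements, $L$ is non-trivial, so the theorem applies to $L$. Take $a:=a_1$. Then $a'=a_1'$ equals $a_2$ or $a_3$, and among $a_1,a_2,a_3$ there remains a third element $t$ distinct from both $a_1$ and $a_1'$. Since any two distinct members of $\{a_1,a_2,a_3\}$ are complements of each other, $t$ is a complement of $a_1=a$ and simultaneously a complement of $a_1'=a'$. Thus $t$ is a common complement of $a$ and $a'$, contradicting the theorem, and hence no such subalgebra can exist.

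There is essentially no hard step here: the entire content lies in the preceding theorem, and the corollary merely observes that the mutually complementary middle triangle of $M_3$ forces a common complement of $a$ and $a'$ no matter which of the admissible complementations the subalgebra happens to carry. The only point deserving a word of care is the verification that the three middle elements are pairwise complements, which is precisely what makes the remaining atom $t$ complement both the chosen atom and its image under $\,'$.
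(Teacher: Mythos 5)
Your proof is correct and follows exactly the route the paper intends: the corollary is stated as an immediate consequence of the preceding theorem, and your verification --- that the three middle elements of $M_3$ are pairwise complements, that $\,'$ maps each of them to another one, and hence that the remaining middle element is a common complement of $a$ and $a'$, contradicting the theorem (applicable since $0\ne1$ in $L$) --- is precisely the routine detail the paper leaves to the reader.
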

\label{key}
On the other hand, we can show that if $x$ and $y$ or $x$ and $y'$ are comparable for all $x,y\in L$ then the ortholattice $\mathbf L$ satisfies identity (\ref{equ1}).

\begin{theorem}\label{th1}
Let $\mathbf L=(L,\vee,\wedge,{}',0,1)$ be an ortholattice having the property that for all $x,y\in L$, either $x$ and $y$ or $x$ and $y'$ are comparable with each other. Then $\mathbf L$ satisfies identity {\rm(\ref{equ1})}.
\end{theorem}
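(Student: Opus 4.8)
The plan is to establish the coincidence identity~(\ref{equ1}) by a direct case analysis driven by the comparability hypothesis: for each pair $x,y\in L$ I would compute both symmetric differences $x+_1y=(x'\wedge y)\vee(x\wedge y')$ and $x+_2y=(x\vee y)\wedge(x'\vee y')$ and verify that they agree. Since both $+_1$ and $+_2$ are evidently symmetric in their two arguments, it suffices to treat the two essentially distinct situations supplied by the hypothesis, namely $x\le y$ and $x\le y'$; the reversed comparabilities $y\le x$ and $y'\le x$ then follow by this symmetry. Throughout I would use that, in an ortholattice, $'$ is an antitone involution (hence satisfies De Morgan's laws) together with $x\vee x'=1$ and $x\wedge x'=0$.

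In the first case, assuming $x\le y$, absorption gives $x\vee y=y$, while $x\wedge y'\le y\wedge y'=0$ forces $x\wedge y'=0$; hence $x+_1y=x'\wedge y$. By antitonicity $y'\le x'$, so $x'\vee y'=x'$ and $x+_2y=(x\vee y)\wedge(x'\vee y')=y\wedge x'=x'\wedge y$, matching the first value. In the second case, assuming $x\le y'$ (equivalently $y\le x'$ by the involution), one has $x\wedge y\le y'\wedge y=0$, so $x\wedge y=0$ and, by De Morgan's laws, $x'\vee y'=(x\wedge y)'=1$; therefore $x+_2y=(x\vee y)\wedge1=x\vee y$. On the other side $x\le y'$ yields $x\wedge y'=x$ and $y\le x'$ yields $x'\wedge y=y$, so $x+_1y=y\vee x=x\vee y$, again in agreement.

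As the hypothesis guarantees that every pair $(x,y)$ falls under at least one of these comparabilities, the identity holds on all of $L$. The computations themselves are routine applications of absorption and De Morgan's laws, so the only point demanding a little care is the bookkeeping: I would make sure the hypothesis as stated — that for each pair either $x,y$ or $x,y'$ is comparable — really covers all four directed sub-cases, and that the appeal to the symmetry of $+_1$ and $+_2$ is invoked only to dispose of $y\le x$ and $y'\le x$. I do not expect any genuine obstacle beyond this.
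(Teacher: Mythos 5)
Your computations in the two cases you treat ($x\le y$ and $x\le y'$) are correct and match the paper's, but the symmetry reduction contains a genuine gap: commutativity of $+_1$ and $+_2$ does dispose of $y\le x$ (swapping the arguments turns it into your proven case ``first $\le$ second''), yet it does \emph{not} dispose of $y'\le x$. To handle the pair $(x,y)$ with $y'\le x$ by commutativity, you would need the swapped pair $(y,x)$ to fall under one of your proven cases, i.e.\ you would need $y\le x$ or $y\le x'$. But $y'\le x$ gives neither: by the antitone involution it is equivalent to $x'\le y$, which is the \emph{reverse} of $y\le x'$ (indeed $y\le x'$ is equivalent to $x\le y'$, the opposite of what is assumed). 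So the swap maps this case back to a case of the same type, and the reduction is circular. The gap is not vacuous: in the ortholattice $\mathbf O_6$ of Example~\ref{ex3}, take $x=b$ and $y=d$; then $y'=a\le b=x$, while $b\not\le a$ and $b,d$ are incomparable, so neither of your two proven cases applies to $(b,d)$, nor to the swapped pair $(d,b)$ (since $d\not\le b$ and $d\not\le b'=c$). This pair is reached by the hypothesis only through the case you skipped.

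The repair is a fourth computation, dual to your third, which is exactly what the paper does (it treats all four directed comparabilities $x\le y$, $y\le x$, $x\le y'$, $y'\le x$ explicitly): if $y'\le x$, then $x'\le y$, hence $x'\wedge y=x'$ and $x\wedge y'=y'$, so $x+_1y=x'\vee y'$; moreover $x'\vee y'\le y\vee x$, so $x+_2y=(x\vee y)\wedge(x'\vee y')=x'\vee y'$ as well. With this case added, your argument is complete and coincides with the paper's proof.
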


\begin{proof}
Let $a,b\in L$. \\
If $a\le b$ then $b'\le a'$ and hence $a\wedge b'\le b\wedge a'$ and
\[
a+_1b=(a'\wedge b)\vee(a\wedge b')=(b\wedge a')\vee(a\wedge b')=b\wedge a'=(a\vee b)\wedge(a'\vee b')=a+_2b.
\]
If $b\le a$ then $a'\le b'$ and hence $b\wedge a'\le a\wedge b'$ and
\[
a+_1b=(a'\wedge b)\vee(a\wedge b')=(b\wedge a')\vee(a\wedge b')=a\wedge b'=(a\vee b)\wedge(a'\vee b')=a+_2b.
\]
If $a\le b'$ then $b\le a'$ and hence $b\vee a\le a'\vee b'$ and
\[
a+_1b=(a'\wedge b)\vee(a\wedge b')=b\vee a=(b\vee a)\wedge(a'\vee b')=(a\vee b)\wedge(a'\vee b')=a+_2b.
\]
If $b'\le a$ then $a'\le b$ and hence $a'\vee b'\le b\vee a$ and
\[
a+_1b=(a'\wedge b)\vee(a\wedge b')=a'\vee b'=(b\vee a)\wedge(a'\vee b')=(a\vee b)\wedge(a'\vee b')=a+_2b.
\]
\end{proof}

Recall that the {\em horizontal sum} of a family $(C_i,\le_i,0,1),i\in I,$ of bounded chains with $C_i\cap C_j=\{0,1\}$ for all $i,j\in I$ with $i\ne j$ is the bounded lattice
\[
\left(\bigcup_{i\in I}C_i,\bigcup_{i\in I}\le_i,0,1\right).
\]

\begin{theorem}\label{th2}
Let $\mathbf C_1=(C_1,\le,0,1)$ and $\mathbf C_2=(C_2,\le,0,1)$ be bounded chains satisfying $C_1\cap C_2=\{0,1\}$, let $\mathbf L=(L,\vee,\wedge,0,1)$ denote the horizontal sum of $\mathbf C_1$ and $\mathbf C_2$ and let $\,'$ be a unary operation on $L$. Then the following holds:
\begin{enumerate}[{\rm(i)}]
\item The operation $\,'$ is a complementation if and only if $0'=1$, $1'=0$, $(C_1\setminus\{0,1\})'\subseteq C_2\setminus\{0,1\}$ and $(C_2\setminus\{0,1\})'\subseteq C_1\setminus\{0,1\}$,
\item if $\,'$ is a complementation then $\mathbf L$ satisfies identity {\rm(\ref{equ1})},
\item if $\,'$ is a complementation then $\mathbf L$ satisfies De Morgan's laws if and only if $\,'$ is antitone.
\end{enumerate}
\end{theorem}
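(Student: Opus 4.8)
The plan is to prove all three parts by exploiting the two defining features of a horizontal sum: any two elements lying in different chains have join $1$ and meet $0$, while two elements of the same chain are comparable. I write $A:=C_1\setminus\{0,1\}$ and $B:=C_2\setminus\{0,1\}$ for the two ``interiors''. For (i) I would first record that $0'=1$ and $1'=0$ are each necessary and sufficient at the two bounds, since $0\vee0'=0'$ and $1\wedge1'=1'$. For an interior element $x\in A$ I would show a complement cannot lie in $C_1$: if $x'$ were comparable to $x$ then $x\vee x'=\max(x,x')$ and $x\wedge x'=\min(x,x')$, and forcing these to equal $1$ and $0$ simultaneously is impossible for $x\notin\{0,1\}$; in particular $x'\in\{0,1\}$ is excluded as well. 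Hence $x'\in B$, and conversely every element of $B$ is a complement of $x$ because it lies in the other chain. This yields the stated containments, and the symmetric argument handles $x\in B$.

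For (ii) the key consequence of (i) is that $'$ sends each interior element into the opposite interior. I would then verify identity~(\ref{equ1}) by a short case distinction. If $x$ or $y$ equals $0$ or $1$, both sides reduce directly (using $0'=1$, $1'=0$) and agree. If $x,y$ lie in the same interior, then $x',y'$ lie in the opposite interior, so both meets on the left-hand side are cross-chain and vanish, while on the right-hand side $x\vee y$ and $x'\vee y'$ sit in opposite chains and meet to $0$; both sides are $0$. If $x,y$ lie in different interiors, then so do $x'$ and $y'$, and one checks that both sides collapse to $1$. I note that although the hypothesis of Theorem~\ref{th1} is in fact satisfied here, that theorem assumes an ortholattice, whereas in (ii) the operation $'$ need be neither antitone nor an involution, so the computation must be carried out directly; fortunately none of the cases above uses antitonicity or the involution law.

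For (iii) the forward implication is a general lattice fact requiring no structure: if the De~Morgan law $(x\wedge y)'\approx x'\vee y'$ holds and $x\le y$, then $x=x\wedge y$ gives $x'=x'\vee y'$, i.e.\ $y'\le x'$, so $'$ is antitone. For the converse I would again argue by cases. When $x$ or $y$ is $0$ or $1$, or when $x,y$ lie in different interiors, both De~Morgan identities hold using only the complementation structure, since cross-chain joins are $1$ and cross-chain meets are $0$. The cases that genuinely require antitonicity are the same-chain ones: for $x,y\in A$ with, say, $x\le y$, antitonicity gives $y'\le x'$, whence $(x\vee y)'=y'=x'\wedge y'$ and $(x\wedge y)'=x'=x'\vee y'$; the case $x,y\in B$ is symmetric.

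The routine bookkeeping is the bulk of the work, but the one genuinely load-bearing step is the same-chain case in the converse of (iii): this is precisely where antitonicity is indispensable, and it is what makes the biconditional in (iii) sharp. Everywhere else the collapse of cross-chain meets and joins does the work automatically.
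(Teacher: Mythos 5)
Your proof is correct and follows essentially the same route as the paper: all three parts are handled by the same case analysis (bounds, same interior, opposite interiors) driven by the two structural facts that cross-chain joins are $1$ and cross-chain meets are $0$, with antitonicity invoked exactly in the same-chain case of (iii). Your argument for (i) even makes explicit the comparability contradiction that the paper leaves implicit when asserting $a'\in L\setminus C_1$.
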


\begin{proof}
Let $a,b\in L$.
\begin{enumerate}[(i)]
\item First assume $\,'$ to be a complementation. As mentioned in the beginning, we have $0'=1$ and $1'=0$. Moreover, if $a\in C_1\setminus\{0,1\}$ then $a'\in L\setminus C_1=C_2\setminus\{0,1\}$ showing	$(C_1\setminus\{0,1\})'\subseteq C_2\setminus\{0,1\}$. By symmetry, $(C_2\setminus\{0,1\})'\subseteq C_1\setminus\{0,1\}$. Conversely, if $0'=1$, $1'=0$, $(C_1\setminus\{0,1\})'\subseteq C_2\setminus\{0,1\}$ and $(C_2\setminus\{0,1\})'\subseteq C_1\setminus\{0,1\}$ then $\,'$ is a complementation, namely the equalities $a\vee a'=1$ and $a\wedge a'=0$ can be seen by distinguishing the cases $a=0$, $a=1$, $a\in C_1\setminus\{0,1\}$ and $a\in C_2\setminus\{0,1\}$.
\item If $a\in\{0,1\}$ or $b\in\{0,1\}$ then $a+_1b=a+_2b$. If $a,b\in C_1\setminus\{0,1\}$ then $a\vee b\in C_1\setminus\{0,1\}$ and $a'\vee b'\in C_2\setminus\{0,1\}$ and hence
\[
a+_1b=(a'\wedge b)\vee(a\wedge b')=0\vee0=0=(a\vee b)\wedge(a'\vee b')=a+_2b.
\]
If $a\in C_1\setminus\{0,1\}$ and $b\in C_2\setminus\{0,1\}$ then $a'\wedge b\in C_2\setminus\{0,1\}$ and $a\wedge b'\in C_1\setminus\{0,1\}$ and hence
\[
a+_1b=(a'\wedge b)\vee(a\wedge b')=1=1\wedge1=(a\vee b)\wedge(a'\vee b').
\]
The remaining cases can be treated similarly.
\item Assume $\,'$ to be a complementation. If $\mathbf L$ satisfies De Morgan's laws and $a\le b$ then $b'=(a\vee b)'=a'\wedge b'\le a'$. This shows that $\,'$ is antitone. Conversely, assume $\,'$ to be antitone. If $a,b\in C_1$ and $a\le b$ then $(a\vee b)'=b'=b'\wedge a'$. If $a\in C_1\setminus\{0,1\}$ and $b\in C_2\setminus\{0,1\}$ then $(a\vee b)=1'=0=a'\wedge b'$. The other cases can be treated similarly. Hence $\mathbf L$ satisfies the identity $(x\vee y)'\approx x'\wedge y'$. Dual arguments show that $\mathbf L$ satisfies also the identity $(x\wedge y)'\approx x'\vee y'$.
\end{enumerate}
\end{proof}

We apply Theorem~\ref{th2} in the following cases.

\begin{example}\label{ex3}
Theorem~\ref{th2} shows that the non-modular ortholattice $\mathbf O_6=(O_6,\vee,\wedge,{}')$ depicted in Fig.~4
	
\vspace*{-4mm}
	
\begin{center}
\setlength{\unitlength}{7mm}
\begin{picture}(4,8)
\put(2,1){\circle*{.3}}
\put(1,3){\circle*{.3}}
\put(3,3){\circle*{.3}}
\put(1,5){\circle*{.3}}
\put(3,5){\circle*{.3}}
\put(2,7){\circle*{.3}}
\put(2,1){\line(-1,2)1}
\put(2,1){\line(1,2)1}
\put(1,3){\line(0,1)2}
\put(3,3){\line(0,1)2}
\put(2,7){\line(-1,-2)1}
\put(2,7){\line(1,-2)1}
\put(1.85,.3){$0$}
\put(.35,2.85){$a$}
\put(3.4,2.85){$c$}
\put(.35,4.85){$b$}
\put(3.4,4.85){$d$}
\put(1.85,7.4){$1$}
\put(1.2,-.75){{\rm Fig.~4}}
\put(0,-1.75){{\rm Ortholattice $\mathbf O_6$}}
\end{picture}
\end{center}
	
\vspace*{10mm}
	
satisfies identity {\rm(\ref{equ1})} since $\mathbf O_6$ is the horizontal sum of two four-element chains where the antitone involution $\,'$ is defined by
\[
\begin{array}{l|cccccc}
x  & 0 & a & b & c & d & 1 \\
\hline
x' & 1 & d & c & b & a & 0
\end{array}
\]
Consider the lattice $\mathbf O_6$ once more, but define the complementation as follows:
\[
\begin{array}{l|cccccc}
x  & 0 & a & b & c & d & 1 \\
\hline
x' & 1 & c & d & a & b & 0
\end{array}
\]
This operation is again an involution but it is not antitone. The resulting algebra will be denoted by $\mathbf O_6^*$. Again, $\mathbf O_6^*$ satisfies identity {\rm(\ref{equ1})}.

\end{example}

\begin{example}\label{ex2}
The non-modular lattice $\mathbf N_5=(N_5,\vee,\wedge)$ visualized in Fig.~5

\vspace*{-4mm}

\begin{center}
\setlength{\unitlength}{7mm}
\begin{picture}(4,8)
\put(2,1){\circle*{.3}}
\put(1,4){\circle*{.3}}
\put(3,3){\circle*{.3}}
\put(3,5){\circle*{.3}}
\put(2,7){\circle*{.3}}
\put(2,1){\line(-1,3)1}
\put(2,1){\line(1,2)1}
\put(3,3){\line(0,1)2}
\put(2,7){\line(-1,-3)1}
\put(2,7){\line(1,-2)1}
\put(1.85,.3){$0$}
\put(3.4,2.85){$a$}
\put(3.4,4.85){$c$}
\put(.35,3.85){$b$}
\put(1.85,7.4){$1$}
\put(1.2,-.75){{\rm Fig.~5}}
\put(-1,-1.75){{\rm Non-modular lattice $\mathbf N_5$}}
\end{picture}
\end{center}

\vspace*{10mm}

is the horizontal sum of a three-element and a four-element chain. The complementation $\,'$ defined by
\[
\begin{array}{l|ccccc}
x  & 0 & a & b & c & 1 \\
	\hline
x' & 1 & b & a & b & 1
\end{array}
\]
is antitone, but not an involution and it does even not satisfy the identity $x\le x''$ since $c\not\le a=c''$. On the contrary, it satisfies the identity $x''\le x$.

According to Theorem~\ref{th2}, $\mathbf N_5=(N_5,\vee,\wedge,{}')$ satisfies De Morgan's laws and identity {\rm(\ref{equ1})}. One can easily check that $\mathbf N_5$ has just five congruences, namely $\Delta,\mu,\alpha,\beta$ and $\nabla$ defined by
\begin{align*}
\Delta & :=\{(x,x)\mid x\in N_5\}, \\
   \mu & :=\{0\}^2\cup\{a,c\}^2\cup\{b\}^2\cup\{1\}^2, \\
\alpha & :=\{0,b\}^2\cup\{a,c,1\}^2, \\
 \beta & :=\{0,a,c\}^2\cup\{b,1\}^2, \\
\nabla & :=N_5^2.
\end{align*}
The congruence lattice of $\mathbf N_5$ is depicted in Fig.~6:

\vspace*{-4mm}

\begin{center}
\setlength{\unitlength}{7mm}
\begin{picture}(4,8)
\put(2,1){\circle*{.3}}
\put(2,3){\circle*{.3}}
\put(1,5){\circle*{.3}}
\put(3,5){\circle*{.3}}
\put(2,7){\circle*{.3}}
\put(2,1){\line(0,1)2}
\put(2,3){\line(-1,2)1}
\put(2,3){\line(1,2)1}
\put(2,7){\line(-1,-2)1}
\put(2,7){\line(1,-2)1}
\put(1.75,.3){$\Delta$}
\put(2.4,2.85){$\mu$}
\put(.2,4.85){$\alpha$}
\put(3.4,4.85){$\beta$}
\put(1.75,7.4){$\nabla$}
\put(1.2,-.75){{\rm Fig.~6}}
\put(-1.2,-1.75){{\rm Congruence lattice of $\mathbf N_5$}}
\end{picture}
\end{center}

\vspace*{10mm}

and hence $\mathbf N_5$ is subdirectly irreducible. However, we can define the complementation in $\mathbf N_5$ also in a different way, namely as follows:
\[
\begin{array}{l|ccccc}
x  & 0 & a & b & c & 1 \\
\hline
x' & 1 & b & c & b & 1
\end{array}
\]
Then the resulting algebra $\mathbf N_5^*$ satisfies the identities {\rm(\ref{equ1})} and $x\le x''$ since $a\le c=a''=c''$. The ortholattice $\mathbf O_6$ as well as the algebra $\mathbf O_6^*$ has also five congruences, namely $\Delta,\mu,\alpha,\beta$ and $\nabla$ defined by
\begin{align*}
\Delta & :=\{(x,x)\mid x\in O_6\}, \\
   \mu & :=\{0\}^2\cup\{a,b\}^2\cup\{b',a'\}^2\cup\{1\}^2, \\
\alpha & :=\{0,a,b\}^2\cup\{b',a',1\}^2, \\
 \beta & :=\{0,b',a'\}^2\cup\{a,b,1\}^2, \\
\nabla & :=O_6^2.
\end{align*}
The congruence lattices of $\mathbf N_5^*$, $\mathbf O_6$ and $\mathbf O_6^*$ coincide with that of $\mathbf N_5$ and hence also $\mathbf N_5^*$, $\mathbf O_6$ and $\mathbf O_6^*$ are subdirectly irreducible.
\end{example}

The variety $\mathcal W$ of lattices with complementation satisfying identity {\rm(\ref{equ1})} has several interesting subvarieties. Among these are the variety $\mathcal B$ of Boolean algebras and the varieties $\mathcal V(\mathbf N_5)$, $\mathcal V(\mathbf N_5^*)$, $\mathcal V(\mathbf O_6)$ and $\mathcal V(\mathbf O_6^*)$ generated by the corresponding algebras. These do not coincide because they satisfy different identities despite the fact that as lattices (without complementation) $\mathbf N_5$ and $\mathbf N_5^*$ as well as $\mathbf O_6$ and $\mathbf O_6^*$ are isomorphic.

It is worth noticing that also the ortholattice visualized in Fig.~3 is subdirectly irreducible and hence it also generates a subvariety of the variety $\mathcal W$ which does not coincide with afore mentioned ones.

By J\'onsson Lemma, the two-element Boolean lattice $\mathbf2=\{0,1\}$ is the only subdirectly irreducible member of $\mathcal B$, $\mathbf2$ and $\mathbf N_5$ are the only subdirectly irreducible members of $\mathcal V(\mathbf N_5)$, and $\mathbf2$ and $\mathbf O_6$ are the only subdirectly irreducible members of $\mathcal V(\mathbf O_6)$.

Moreover, $\mathcal V(\mathbf O_6)$ satisfies the identity $x''\approx x$ since the complementation in $\mathbf 2$ as well as in $\mathbf O_6$ is an involution. On the other hand, $\mathcal V(\mathbf N_5)$ does not satisfy this identity, it satisfies only the identities $x'''\approx x'$ and $x''\le x$.

That the varieties $\mathcal V(\mathbf N_5)$ and $\mathcal V(\mathbf O_6)$ both satisfy De Morgan's laws follows from Theorem~\ref{th2}.

As pointed out already by G.~Birkhoff \cite{B35}, if $\mathbf A$ is a subdirectly irreducible algebra and $\mathcal V(\mathbf A)$ denotes the variety generated by $\mathbf A$ then the cardinality of the free algebra $\mathbf F_{\mathcal V(\mathbf A)}(n)$ in $\mathcal V(\mathbf A)$ with $n$ free generators can be estimated as follows:
\begin{equation}\label{equ9}
|\mathbf F_{\mathcal V(\mathbf A)}(n)|\le|A|^{|A|^n}.
\end{equation}
However, J.~Berman \cite B used the estimation $|\mathbf F_{\mathcal V(\mathbf A)}(n)|\le|A|^k$ and got a method how to determine a ``good'' number $k$. The process may not give a minimal $k$ in general, though in congruence-distributive varieties it should. E.~W.~Kiss and R.~Freese developed a computer program accessible at the address
\[
http:\slash\slash www.cs.elte.hu\slash\,\tilde{}ewkiss\slash software\slash naprog\slash
\]
computing the optimal estimation of $k$. Of course, the varieties $\mathcal V(\mathbf N_5)$ and $\mathcal V(\mathbf O_6)$ are locally finite and the cardinalities of the corresponding free algebras and estimations for k computed by this program are as follows:

For $\mathcal V(\mathbf N_5)$ we have
\[
\begin{array}{r|r|r}
n & |\mathbf F_{\mathcal V(\mathbf N_5)}(n)| &    k \\
\hline
1 &                                        5 &    1 \\
2 &                                      152 &    9 \\
3 &                                \ge100036 &   61 \\
4 &                                          &  369 \\
5 &                                          & 2101
\end{array}
\]	
and for $\mathcal V(\mathbf O_6)$
\[
\begin{array}{r|r|r}
n & |\mathbf F_{\mathcal V(\mathbf O_6)}(n)| &    k \\
\hline
1 &                                        4 &    2 \\
2 &                                      100 &    4 \\
3 &                                \ge249275 &   48 \\
4 &                                          &  400 \\
5 &                                          & 2880
\end{array}
\]
The same tables hold for $\mathbf N_5^*$ and $\mathbf O_6^*$, respectively.

\section{Identities of symmetric differences}
	
As mentioned in the introduction, it is well-known (see e.g.\ \cite B) that in a Boolean lattice $(L,\vee,\wedge,{}',0,1)$ the term operations $+_1$ and $+_2$ coincide and these operations are associative, i.e.\ they satisfy the identities
\[
(x+_1y)+_1z\approx x+_1(y+_1z)\text{ and }(x+_2y)+_2z\approx x+_2(y+_2z),
\]
respectively. The question arises whether, conversely, in a lattice with complementation, associativity of $+_1$ or $+_2$ implies lattice distributivity. We will show that this is indeed the case.

At first we prove an auxiliary result using identites similar to those studied in \cite{CLP} and \cite{CP}.

\begin{lemma}\label{lem3}
Let $(L,\vee,\wedge,{}',0,1)$ be a lattice with complementation. Then the following are equivalent:
\begin{enumerate}[{\rm(i)}]
\item $x\wedge y\approx x\wedge(x'\vee y)$,
\item $x\vee y\approx x\vee(x'\wedge y)$.
\end{enumerate}
\end{lemma}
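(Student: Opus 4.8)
The plan is to first reduce both identities to the involutive setting and then to prove a single implication, obtaining the other by duality. By Lemma~\ref{lem2}, each of (i) and (ii) already forces $x''\approx x$, so in either direction I may assume that $\,'$ is an involution. Moreover, (i) and (ii) are order-duals of each other (interchanging $\wedge$ with $\vee$ and $0$ with $1$ carries (i) to (ii)), and the class of lattices with complementation is closed under forming the order-dual, since the dual of a complementation is again a complementation. Hence it suffices to establish the single implication (i)$\Rightarrow$(ii); the converse then follows by dualizing, exactly as in the proof of Lemma~\ref{lem2}.

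To prove (i)$\Rightarrow$(ii) I would first record the companion identity obtained from (i) by the substitution $x\mapsto x'$ together with $x''\approx x$, namely
\[
x'\wedge(x\vee y)\approx x'\wedge y.
\]
Instantiating this with a variable in place of $y$ gives, for $a\le b$, the disjointness relation $a\wedge b'\approx0$ (take pivot $b$ and note $b\vee a=b$), and more generally the ``meet$\,=0$'' halves of De~Morgan's laws, i.e.\ $(x\vee y)\wedge(x'\wedge y')\approx0$ and $(x\wedge y)\wedge(x'\vee y')\approx0$, each computed by absorbing one factor into $x\wedge x'\approx0$. Substituting the companion identity into the left-hand side of (ii) also reduces the target to
\[
x\vee\big(x'\wedge(x\vee y)\big)\approx x\vee y,
\]
which is precisely the conclusion of the modular law for the pair $x\le x\vee y$; so under an additional modularity assumption (ii) would be immediate.

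The main obstacle is exactly this last passage: promoting the meet-identity (i) to the join-identity (ii) means crossing from meets to joins, and without distributivity only the disjointness halves above come for free. The clean way to let the involution do the remaining work is to secure that $\,'$ is \emph{antitone}; since an order-reversing involution of a lattice is automatically a dual automorphism, antitonicity together with $x''\approx x$ yields De~Morgan's laws, whereupon applying $\,'$ to identity (i) and renaming the variables through the involution transforms (i) verbatim into (ii). Thus the plan's crux is to upgrade the relation $a\le b\Rightarrow a\wedge b'\approx0$ to $a\le b\Rightarrow b'\le a'$, and it is here that I expect the real difficulty to lie, because the naive completion $b'\approx b'\wedge(a\vee a')\approx b'\wedge a'$ is itself a distributive step. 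This antitonicity step, exploiting the disjointness identities above, is the one I would attack first.
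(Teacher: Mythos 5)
Your reduction to the involutive case via Lemma~\ref{lem2}, the duality argument disposing of (ii)$\Rightarrow$(i), the companion identity $x'\wedge(x\vee y)\approx x'\wedge y$, and the intended final step (antitone involution $\Rightarrow$ dual automorphism $\Rightarrow$ De Morgan's laws $\Rightarrow$ apply $\,'$ to (i) and rename variables through the involution to obtain (ii)) are all sound. But the proposal has a genuine gap at exactly the point you flag yourself: antitonicity of $\,'$ is never proved, only announced as ``the step I would attack first.'' Since every later step hinges on it, what you have is a correct strategy with its decisive link missing, not a proof.

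The gap closes in one line with the identity you already derived. The companion identity holds for all substitutions, so replace its pivot variable by $x\vee y$ and the other variable by $x'$:
\[
(x\vee y)'\wedge x'\approx(x\vee y)'\wedge\big((x\vee y)\vee x'\big)\approx(x\vee y)'\wedge1\approx(x\vee y)',
\]
i.e.\ $(x\vee y)'\le x'$ for all $x,y$; for $a\le b$ write $b=a\vee b$ to get $b'\le a'$. No distributive step of the form $b'\approx b'\wedge(a\vee a')\approx b'\wedge a'$ is needed---the trick is to choose the pivot so that the expression in parentheses lies above $x\vee x'$ and hence equals $1$. This is precisely identity (\ref{equ14}) in the paper's own proof. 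From that point the two arguments diverge: the paper stays purely equational, pushing on through (\ref{equ15}) and (\ref{equ17}) to the De Morgan law $(x\vee y)'\approx x'\wedge y'$ and then to (ii), whereas your route---antitone involution is a dual automorphism, hence De Morgan's laws hold, hence applying $\,'$ to (i) and substituting $x\mapsto x'$, $y\mapsto y'$ turns (i) verbatim into (ii)---is shorter and more conceptual once antitonicity is secured. Fill in the one-line argument above and your proof is complete, and arguably cleaner than the paper's.
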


\begin{proof}
Assume (i). Then, by Lemma~\ref{lem2} we have $x''\approx x$, and from (i) we get
\begin{equation}\label{equ13}
x'\wedge y\approx x'\wedge(x''\vee y)\approx x'\wedge(x\vee y).
\end{equation}
Using this we obtain
\[
(x\vee y)'\wedge x'\approx(x\vee y)'\wedge\big((x\vee y)\vee x'\big)\approx(x\vee y)'\wedge1\approx(x\vee y)'.
\]
Thus
\begin{equation}\label{equ14}
(x\vee y)'\wedge x'\approx(x\vee y)'
\end{equation}
and hence
\[
x\vee(x'\vee y)'\approx x\vee\big((x'\vee y)'\wedge x''\big)\approx x\vee\big((x'\vee y)'\wedge x\big)\approx x
\]
showing
\begin{equation}\label{equ15}
x\approx x\vee(x'\vee y)'.
\end{equation}
Using (i) we get
\[
x\wedge\big(y\vee(y\vee x')'\big)\approx x\wedge\big(x'\vee y\vee(y\vee x')'\big)\approx x\wedge1\approx x
\]
and hence
\[
x\approx x\wedge\big(y\vee(y\vee x')'\big).
\]
Using this and (\ref{equ15}) we obtain
\[
x\vee(x\vee y')'\approx\big(x\vee(x\vee y')'\big)\vee\Big(\big(x\vee(x\vee y')'\big)\wedge y\Big)\approx\big(x\vee(x\vee y')'\big)\vee y\approx x\vee\big((x\vee y')'\vee y\big)\approx x\vee y.
\]
Thus
\begin{equation}\label{equ17}
x\vee y\approx x\vee(x\vee y')'.
\end{equation}
Using this and (\ref{equ13}) we get
\[
(x\vee y)\wedge x'\approx\big(x\vee(x\vee y')'\big)\wedge x'\approx(x\vee y')'\wedge x'.
\]
Thus
\[
(x\vee y)\wedge x'\approx(x\vee y')'\wedge x'.
\]
From this, (\ref{equ14}) and (\ref{equ13}) we obtain
\[
(x\vee y)'\approx(x\vee y)'\wedge x'\approx(x\vee y'')'\wedge x'\approx(x\vee y')\wedge x'\approx x'\wedge y'.
\]
Thus
\[
(x\vee y)'\approx x'\wedge y'.
\]
Finally, using this and (\ref{equ17}) we get
\[
x\vee y\approx x\vee(x\vee y')'\approx x\vee(x'\wedge y'')\approx x\vee(x'\wedge y).
\]
Therefore (ii) holds. On the other hand, if (ii) holds, then again $x''\approx x$ by Lemma~\ref{lem2}. The proof that (ii) implies (i) follows by dualizing the preceding arguments.
\end{proof}

Now we get a connection of the afore mentioned identities with distributivity of the lattice in question. This is an essential result for the proof of our next theorem.

\begin{theorem}\label{th3}
Let $\mathbf L=(L,\vee,\wedge,{}',0,1)$ be a lattice with complementation satisfying one of the following identities
\begin{enumerate}[{\rm(i)}]
\item $x\wedge y\approx x\wedge(x'\vee y)$,
\item $x\vee y\approx x\vee(x'\wedge y)$.
\end{enumerate}
Then $\mathbf L$ is distributive and hence Boolean.
\end{theorem}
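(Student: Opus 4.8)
The plan is to prove a single distributive law and then invoke that a complemented distributive lattice is Boolean; recall that in any lattice the two distributive laws are equivalent, so establishing one of them suffices. First I would collect the structural facts already at hand. By Lemma~\ref{lem3} the identities (i) and (ii) are equivalent, so $\mathbf L$ satisfies both of them. By Lemma~\ref{lem2} we have $x''\approx x$, and, exactly as in the proof of Lemma~\ref{lem3} (same hypotheses), De Morgan's law $(x\vee y)'\approx x'\wedge y'$ holds; applying $x''\approx x$ to it yields also $(x\wedge y)'\approx x'\vee y'$. Thus $\mathbf L$ carries an involutory complementation satisfying De Morgan's laws.

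Now fix $a,b,c\in L$ and set $p:=a\wedge(b\vee c)$ and $q:=(a\wedge b)\vee(a\wedge c)$. Since $q\le p$ holds in every lattice, it remains only to prove $p\le q$. The strategy is to first establish the ``disjointness'' relation $p\wedge q'\approx0$ by a direct calculation, and then to upgrade it to the equality $p=q$ using identity (ii).

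For the calculation I would use De Morgan's law to write $q'\approx(a'\vee b')\wedge(a'\vee c')$ and then apply identity (i) twice, each time in the form $a\wedge(a'\vee y)\approx a\wedge y$:
\[
a\wedge(a'\vee b')\wedge(a'\vee c')\approx(a\wedge b')\wedge(a'\vee c')\approx b'\wedge\big(a\wedge(a'\vee c')\big)\approx a\wedge b'\wedge c'.
\]
Substituting this into $p\wedge q'\approx(b\vee c)\wedge\big(a\wedge(a'\vee b')\wedge(a'\vee c')\big)$ and using $b'\wedge c'\approx(b\vee c)'$ then gives
\[
p\wedge q'\approx(b\vee c)\wedge a\wedge b'\wedge c'\approx a\wedge\big((b\vee c)\wedge(b\vee c)'\big)\approx a\wedge0\approx0.
\]

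Finally, to pass from $p\wedge q'\approx0$ to $p=q$, I would apply identity (ii) with $x:=q$ and $y:=p$: because $q\le p$ we obtain $p\approx q\vee p\approx q\vee(q'\wedge p)\approx q\vee0\approx q$. This is the distributive law $a\wedge(b\vee c)\approx(a\wedge b)\vee(a\wedge c)$, so $\mathbf L$ is distributive and hence Boolean. I expect the last step to be the only genuinely non-routine point: the relation $p\wedge q'\approx0$ is strictly weaker than $p\le q$ in a general complemented lattice (uniquely complemented lattices need not even be distributive), and it is precisely identity (ii) --- an orthomodular-type law holding here for \emph{all} elements --- that licenses the passage from disjointness to equality. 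The remaining manipulations are routine applications of (i) and De Morgan's laws.
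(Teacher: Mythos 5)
Your proof is correct, but it takes a genuinely different route from the paper's. Both arguments begin identically --- Lemmas~\ref{lem2} and \ref{lem3} give $x''\approx x$ and allow one to assume (i) and (ii) simultaneously --- but from there the paper works purely equationally: it derives the auxiliary identity $x\wedge(y\vee z)\approx x\wedge\big((x\wedge y)\vee z\big)$ (its (\ref{equ20})), then $x\wedge(y\vee z)\approx\big(x\wedge(y\vee z)\big)\vee(x\wedge y)$, and finally expands the right-hand side using (ii), (\ref{equ20}) and (i) until $(x\wedge y)\vee(x\wedge z)$ appears; De Morgan's laws are never invoked. You instead make the complement do the work: having extracted De Morgan's laws (the join law is indeed an intermediate, displayed step in the proof of Lemma~\ref{lem3}, and the meet law follows from it via the involution), you prove the disjointness $p\wedge q'=0$ for $p=a\wedge(b\vee c)$ and $q=(a\wedge b)\vee(a\wedge c)$, and then use (ii) precisely as an orthomodular-type law to convert $q\le p$ together with $p\wedge q'=0$ into $p=q$; each individual step (the two applications of (i), the De Morgan rewritings, and the final substitution $x:=q$, $y:=p$ into (ii)) checks out. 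Your version buys conceptual clarity: it isolates why (ii) is the essential hypothesis --- disjointness alone cannot force $p\le q$ in a general complemented lattice, as Dilworth's non-distributive uniquely complemented lattices show --- and the computations are short. Its only cost is that it leans on a formula that is proved inside, but not stated by, Lemma~\ref{lem3}, so a fully self-contained write-up would either have to repeat that derivation of $(x\vee y)'\approx x'\wedge y'$ or restate it as a separate lemma, whereas the paper's chain of identities, though less transparent, uses nothing beyond the two hypothesized identities and the involution.
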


\begin{proof}
According to Lemmas~\ref{lem2} and \ref{lem3} we have $x''\approx x$, and (i) and (ii) are equivalent. Therefore we may assume that both identities are satisfied. Using these identities we get
\[
x\wedge(y\vee z)\approx x\wedge(x'\vee y\vee z)\approx x\wedge\big(x'\vee(x\wedge y)\vee z\big)\approx x\wedge\big((x\wedge y)\vee z\big).
\]
Thus
\begin{equation}\label{equ20}
x\wedge(y\vee z)\approx x\wedge\big((x\wedge y)\vee z\big).
\end{equation}
Using this we obtain
\begin{align*}
x\wedge(y\vee z) & \approx\big(x\wedge(y\vee z)\big)\vee\big(x\wedge(y\vee z)\wedge y\big)\approx \\
                 & \approx\big(x\wedge(y\vee z)\big)\vee\Big(x\wedge\big((x\wedge y)\vee z\big)\wedge y\Big)\approx \\
                 & \approx\big(x\wedge(y\vee z)\big)\vee\Big((x\wedge y)\wedge\big((x\wedge y)\vee z\big)\Big)\approx\big(x\wedge(y\vee z)\big)\vee(x\wedge y)
\end{align*}
and hence
\[
x\wedge(y\vee z)\approx\big(x\wedge(y\vee z)\big)\vee(x\wedge y).
\]
Finally, using this, (ii), (\ref{equ20}) and (i) we get
\begin{align*}
x\wedge (y\vee z) & \approx\big(x\wedge(y\vee z)\big)\vee(x\wedge y)\approx(x\wedge y)\vee\Big((x\wedge y)'\wedge\big(x\wedge(y\vee z)\big)\Big)\approx \\
                  & \approx(x\wedge y)\vee\Big((x\wedge y)'\wedge x\wedge\big((x\wedge y)\vee z\big)\Big)\approx \\
                  & \approx(x\wedge y)\vee\Big((x\wedge y)'\wedge\big((x\wedge y)\vee z\big)\wedge x\Big)\approx(x\wedge y)\vee\big((x\wedge y)'\wedge z\wedge x\big)\approx \\
                  & \approx(x\wedge y)\vee(x\wedge z).
\end{align*}
\end{proof}

Our main result concerning the associativity of the symmetric difference is as follows.

\begin{theorem}\label{th5}
Let $\mathbf L=(L,\vee,\wedge,{}',0,1)$ be a lattice with complementation. Then $\mathbf L$ is Boolean if and only if either $+_1$ or $+_2$ is associative.
\end{theorem}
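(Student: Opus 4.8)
The plan is to prove the two implications separately and to reduce the substantial direction entirely to Theorem~\ref{th3}. The easy direction is classical: if $\mathbf L$ is Boolean then $+_1$ and $+_2$ coincide with the usual Boolean symmetric difference, whose associativity is well known (see \cite{B79}, \cite{B}), so nothing is required there. Hence the whole content lies in showing that associativity of \emph{either} symmetric difference already forces distributivity. Concretely, from associativity of $+_1$ I aim to derive identity~(ii) of Theorem~\ref{th3}, namely $x\vee y\approx x\vee(x'\wedge y)$, and by the analogous order-dual computation I aim to derive from associativity of $+_2$ the dual identity~(i), $x\wedge y\approx x\wedge(x'\vee y)$. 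In either case Theorem~\ref{th3} then delivers distributivity, and thus that $\mathbf L$ is Boolean.

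For the $+_1$ case I would first record the elementary facts $x+_1x\approx0$ and $0+_1y\approx y$, which hold in every lattice with complementation. Associativity then yields the cancellation law $x+_1(x+_1y)\approx(x+_1x)+_1y\approx0+_1y\approx y$. Writing $u:=x+_1y$, this reads $y\approx x+_1u\approx(x'\wedge u)\vee(x\wedge u')$. From here only absorption and monotonicity are needed: joining with $x$ and absorbing $x\wedge u'\le x$ gives $x\vee y\approx x\vee(x'\wedge u)$, while from $y\ge x'\wedge u$ one gets $x'\wedge y\ge x'\wedge u$, whence $x\vee(x'\wedge y)\ge x\vee(x'\wedge u)\approx x\vee y$; the reverse inequality being trivial, this is precisely identity~(ii). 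The $+_2$ case is the order-dual: starting from $y\approx(x\vee u)\wedge(x'\vee u')$ with $u:=x+_2y$, one extracts $x\wedge y\approx x\wedge(x'\vee u')$ and $x'\vee y\le x'\vee u'$, and combining these gives identity~(i).

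The step I expect to be delicate is exactly this middle one, because at that point $\mathbf L$ is not yet known to be distributive, so none of the tempting distributive simplifications of $(x'\wedge u)\vee(x\wedge u')$ are legitimate. The idea that makes the proof work is to avoid distributing altogether: one uses the cancellation law to re-express $y$ itself through $u=x+_1y$, and then applies only absorption identities and the monotonicity of $\wedge$ and $\vee$, all of which are valid in an arbitrary lattice. I would also verify that this derivation nowhere secretly uses $x''\approx x$; it does not, which is reassuring, since involution should be a consequence of being Boolean rather than a hypothesis. Once identity~(ii) (respectively identity~(i)) has been secured, Theorem~\ref{th3} carries all of the remaining weight.
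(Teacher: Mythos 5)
Your proof is correct, and although it shares the paper's overall architecture---extract a cancellation law from associativity and then reduce everything to Theorem~\ref{th3}---your execution of the crucial middle step is genuinely different and considerably shorter. The paper also starts from cancellation, but in the form $(x+_1y)+_1y\approx x$, i.e.\ it expands $x$ in terms of $y$ and $u:=x+_1y$; untangling that expression then requires a chain of auxiliary identities: $x+_11\approx x'$, $x''\approx x$, $(x+_1y)'\approx x'+_1y\approx x+_1y'$, then $x\wedge(x+_1y)'\approx x\wedge y$, then the decomposition $x\approx(x\wedge y)\vee(x\wedge y')$, and finally $x'\vee y\approx x'\vee(x\wedge y)$, from which identity (ii) of Theorem~\ref{th3} is obtained by substituting $x'$ for $x$ (using involutivity). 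Your choice to expand $y$ instead, via $y\approx x+_1(x+_1y)\approx(x'\wedge u)\vee(x\wedge u')$, is exactly what collapses all of this: the two joinands are already in the right shape, so absorption gives $x\vee y\approx x\vee(x'\wedge u)$, and monotonicity gives $x\vee y\approx x\vee(x'\wedge u)\le x\vee(x'\wedge y)\le x\vee y$, which is identity (ii) in two lines, with no use of $x''\approx x$, of commutativity of $+_1$, or of the De Morgan-type law $(x+_1y)'\approx x'+_1y$; the dual computation handles $+_2$ and identity (i), just as you describe. What the paper's longer route buys is a collection of byproducts of some independent interest (that associativity forces $'$ to be an involution, the law $(x+y)'\approx x'+y$, and the Boolean-style decomposition $x\approx(x\wedge y)\vee(x\wedge y')$); what your route buys is brevity and transparency---it isolates absorption and monotonicity as the only lattice-theoretic facts needed, and it confirms your own observation that involutivity should emerge as a consequence of Booleanness rather than serve as an ingredient of the proof.
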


\begin{proof}
If $\mathbf L$ is Boolean then $+_1=+_2$ and this operation is associative (see \cite{B79}). Conversely, assume either $+_1$ or $+_2$ to be associative. Let $+\in\{+_1,+_2\}$. Then
\[
x+0\approx x, x+1\approx x'\text{ and }x+x\approx0
\]
and hence
\[
(x+y)'\approx x+(y+1)\approx x+y'\approx x'+y\text{ and }x''\approx x+1+1\approx x+0\approx x.
\]
Thus
\[
(x+y)'\approx x+y'\approx x'+y\text{ and }x''\approx x.
\]
For $+_1$ we have
\begin{equation}\label{equ24}
x\approx(x+_1y)+_1y\approx\big(y\wedge(x+_1y)'\big)\vee\big(y'\wedge(x+_1y)\big).
\end{equation}
Using this we obtain
\begin{align*}
x\wedge(x+_1y)' & \approx x\wedge(x+_1y)'\wedge\Big(\big(x\wedge(x+_1y)'\big)\vee\big(x'\wedge(x+_1y)\big)\Big)\approx x\wedge(x+_1y)'\wedge y\approx \\
                & \approx(x\wedge y)\wedge(x'+_1y)\approx(x\wedge y)\wedge\big((x\wedge y)\vee(x'\wedge y')\big)\approx x\wedge y.
\end{align*}
Thus
\[
x\wedge(x+_1y)'\approx x\wedge y.
\]
From this and (\ref{equ24}) we get
\begin{align*}
x & \approx\big(y\wedge(x+_1y)'\big)\vee\big(y'\wedge(x+_1y)\big)\approx(x\wedge y)\vee\big(y'\wedge(x+_1y'')\big)\approx \\
  & \approx(x\wedge y)\vee\big(y'\wedge(x+_1y')'\big)\approx(x\wedge y)\vee(x\wedge y').
\end{align*}
Thus
\[
x\approx(x\wedge y)\vee(x\wedge y').
\]
Using this we obtain
\[
x'\vee y\approx(x'\wedge y)\vee(x'\wedge y')\vee(x\wedge y)\vee(x'\wedge y)\approx(x'\wedge y)\vee(x'\wedge y')\vee(x\wedge y)\approx x'\vee(x\wedge y).	
\]
Thus $x'\vee y\approx x'\vee(x\wedge y)$ and, consequently, $x\vee y\approx x\vee(x'\wedge y)$. By Theorem~\ref{th3} this implies that $\mathbf L$ is Boolean. For $+_2$ we can use similar arguments. We have
\begin{equation}\label{equ27}
x\approx(x+_2y)+_2y\approx\big(y\vee(x+_2y)\big)\wedge\big(y'\vee(x+_2y)'\big).
\end{equation}	
Using this we get
\begin{align*}
x\vee(x+_2y) & \approx\big(x\vee(x+_2y)\big)\vee\Big(\big(x\vee(x+_2y)\big)\wedge\big(x'\vee(x+_2y)'\big)\Big)\approx x\vee(x+_2y)\vee y\approx \\
& \approx(x\vee y)\wedge(x+_2y)\approx(x\vee y)\vee\big((x\vee y)\wedge(x'\vee y')\big)\approx x\vee y.	
\end{align*}
Thus
\[
x\vee(x+_2y)\approx x\vee y.
\]
From this and (\ref{equ27}) we obtain
\[
x\approx\big(y\vee(x+_2y)\big)\wedge\big(y'\vee(x+_2y)'\big)\approx(x\vee y)\wedge\big(y'\vee(x+_2 y')\big)\approx(x\vee y)\wedge(x\vee y').
\]
Thus
\[
x\approx(x\vee y)\wedge(x\vee y')
\]
and hence
\[
x\wedge y\approx(x\vee y)\wedge(x\vee y')\wedge(x\vee y)\wedge(x'\vee y)\approx(x\vee y)\wedge(x\vee y')\wedge(x'\vee y)\approx x\wedge(x'\vee y).
\]
This shows $x\wedge y\approx x\wedge(x'\vee y)$ which, by Theorem~\ref{th3}, implies that $\mathbf L$ is Boolean.
\end{proof}

As shown above, associativity of the symmetric difference in a lattice $\mathbf L$ with complementation yields distributivity of $\mathbf L$. Surprisingly, also a simple identity in only two variables formulated for the symmetric difference yields the same, see the following result.

\begin{theorem}
Let $\mathbf L=(L,\vee,\wedge,{}',0,1)$ be a lattice with complementation satisfying identity {\rm(\ref{equ1})}. Then $\mathbf L$ is a Boolean algebra if and only if $+$ satisfies the identity
\[
(x+y)+y\approx x
\]
where either $+=+_1$ or $+=+_2$.
\end{theorem}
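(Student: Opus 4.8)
My plan is to prove both implications, the reverse being the substantial one. For the forward implication, if $\mathbf L$ is Boolean then $+_1$ and $+_2$ both equal the usual symmetric difference $\triangle$, which is associative and satisfies $y\triangle y\approx0$; hence $(x\triangle y)\triangle y\approx x\triangle(y\triangle y)\approx x$, so both operations satisfy the displayed identity.

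For the converse I would exploit that, by the coincidence identity (\ref{equ1}), the two candidate operations agree, so I may write $+:=+_1=+_2$ with both lattice forms of $+$ available. I first record the identities $x+0\approx x$, $x+1\approx x'$, $x+x\approx0$ and the commutativity of $+$, all valid in any lattice with complementation. Putting $y:=1$ into the hypothesis $(x+y)+y\approx x$ and using $z+1\approx z'$ twice gives $x''\approx x$, so $'$ is an involution; substituting $(u,v):=(y,x)$ into $(u+v)+v\approx u$ and using commutativity gives the companion law $x+(x+y)\approx y$. The crucial observation is that $(x+y)+y\approx x$ is \emph{exactly} equation (\ref{equ24}) in the proof of Theorem~\ref{th5}; therefore, as soon as I also have the complement law $(x+y)'\approx x+y'\approx x'+y$ available, I can copy the remainder of that proof line for line. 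It produces successively $x\wedge(x+y)'\approx x\wedge y$, then $x\approx(x\wedge y)\vee(x\wedge y')$, then $x'\vee y\approx x'\vee(x\wedge y)$, and finally $x\vee y\approx x\vee(x'\wedge y)$; by Theorem~\ref{th3}(ii) this last identity makes $\mathbf L$ distributive, hence Boolean. The choice $+=+_2$ needs no separate treatment because of (\ref{equ1}).

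The main obstacle is establishing the complement law $(x+y)'\approx x'+y$, which in Theorem~\ref{th5} was read off from associativity but must here be derived from (\ref{equ1}) together with the involution $x''\approx x$ obtained above. Writing $A:=x+y\approx(x\vee y)\wedge(x'\vee y')$ and $B:=x'+y\approx(x\wedge y)\vee(x'\wedge y')$ via (\ref{equ1}), the target is the identity $A'\approx B$. I expect the delicate point to be that it does \emph{not} suffice to verify $A\vee B\approx1$ and $A\wedge B\approx0$ (i.e.\ that $B$ is merely a complement of $A$), since under (\ref{equ1}) complements need not be unique — the ortholattice of Fig.~3 even has comparable distinct complements — so one cannot infer $B\approx A'$ from complementarity alone. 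The distinguished complement must instead be pinned down equationally: I would attempt an explicit term-rewriting chain from (\ref{equ1}), in the spirit of the proof of Lemma~\ref{lem3}, using absorptions such as $y'\wedge(x'\vee y')\approx y'$ and $x'\wedge(x'\vee y')\approx x'$ to reduce $(x+y)\wedge y'$ and $(x+y)\wedge x'$ to the tractable forms $(x\vee y)\wedge y'$ and $(x\vee y)\wedge x'$, and thereby force the complement of $(x\vee y)\wedge(x'\vee y')$ into the form $B$. Once $A'\approx B$ is secured, the argument closes exactly as described above.
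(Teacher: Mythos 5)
Your reductions are fine as far as they go (the forward direction, the collapse $+_1=+_2$ under (\ref{equ1}), the derivation of $x''\approx x$ from the hypothesis with $y:=1$, and the observation that the hypothesis is literally (\ref{equ24})), but the proof never gets past what you yourself call the main obstacle: the complement law $(x+y)'\approx x'+y$ is not established, and the route you propose for it --- a term-rewriting derivation from (\ref{equ1}) together with the involution law --- is not merely delicate but impossible, because that implication is false. Counterexample: take the ten-element lattice of Fig.~3 and replace its orthocomplementation by the involution $^*$ which keeps $b^*=b'$, $(b')^*=b$, $c^*=c'$, $(c')^*=c$, $0^*=1$, $1^*=0$, but exchanges the primes on the chain: $a^*=d'$, $d^*=a'$, $(d')^*=a$, $(a')^*=d$. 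Then $^*$ is an involutive (non-antitone) complementation, and a check of all pairs shows that identity (\ref{equ1}) still holds; nevertheless
\[
(b+c)^*=(d')^*=a\ne d=(b\wedge c)\vee(b^*\wedge c^*)=b^*+c,
\]
since $b+c=(b'\wedge c)\vee(b\wedge c')=c\vee b=d'$ while $b^*\wedge c^*=b'\wedge c'=d$. So no chain of lattice laws, (\ref{equ1}) and $x''\approx x$ can yield $(x+y)'\approx x'+y$; any correct derivation would have to invoke the hypothesis $(x+y)+y\approx x$ itself in an essential way, and you give no indication of how to do that (deducing the law from Booleanness would of course be circular). The core of the converse is therefore missing.

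The paper's own proof avoids the complement law altogether --- indeed it never uses (\ref{equ1}) in the converse direction, and treats $+_1$ and $+_2$ separately. For $+_1$ it feeds the hypothesis, in the form $y\approx(y+_1x)+_1x$, directly into absorption laws: $x'\wedge y\approx x'\wedge y\wedge(x+_1y)\approx x'\wedge\big((y+_1x)+_1x\big)\wedge(x+_1y)\approx x'\wedge(x+_1y)$, the last step holding because $x'\wedge(x+_1y)$ equals the joinand $(y+_1x)\wedge x'$ of $(y+_1x)+_1x$; then $x\vee y\approx x\vee\big((y+_1x)+_1x\big)\approx x\vee\big((y+_1x)\wedge x'\big)\approx x\vee(x'\wedge y)$, which is identity (ii) of Theorem~\ref{th3}, hence $\mathbf L$ is Boolean. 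The case $+_2$ is handled dually, with $x''\approx x$ extracted from $x'\approx x+_21$ and the hypothesis. If you want to salvage your plan, replace the hunt for $(x+y)'\approx x'+y$ by this kind of direct use of the hypothesis inside absorption identities.
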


\begin{proof}
Of course, if $\mathbf L$ is a Boolean algebra then according to Theorem~\ref{th5} the symmetric difference is associative and hence it satisfies $(x+y)+y\approx x+(y+y)\approx x+0\approx x$. Conversely, assume $\mathbf L$ to satisfy the identity $(x+y)+y\approx x$. If $\mathbf L$ satisfies the identity $(x+_1y)+_1y\approx x$ then
\begin{align*}
	x'\wedge y & \approx(x'\wedge y)\wedge\big((x'\wedge y)\vee(x\wedge y')\big)\approx x'\wedge y\wedge(x+_1y)\approx \\
	& \approx x'\wedge\big((y+_1x)+_1x\big)\wedge(x+_1y)\approx \\
	& \approx\big( x'\wedge(x+_1y)\big)\wedge\Big(\big((y+_1x)'\wedge x\big)\vee\big((y+_1x)\wedge x'\big)\Big)\approx x'\wedge(x+_1y)	
\end{align*}
and hence
\[
x\vee y\approx x\vee\big((y+_1x)+_1x\big)\approx x\vee\big((y+_1x)'\wedge x\big)\vee\big((y+_1x)\wedge x'\big)\approx x\vee\big(x'\wedge(x+_1y)\big)\approx x\vee(x'\wedge y)
\]
which implies that $\mathbf L$ is Boolean according to Theorem~\ref{th3}. If $\mathbf L$ satisfies the identity $(x+_2y)+_2y\approx x$ then
\begin{align*}
	x\vee y & \approx(x\vee y)\vee\big((x\vee y)\wedge(x'\vee y')\big)\approx(x\vee y)\vee(x+_2y)\approx \\
	& \approx x\vee\big((y+_2x)+_2x\big)\vee(x+_2y)\approx \\
	& \approx x\vee(x+_2y)\vee\Big(\big(y+_2x)\vee x\big)\wedge\big((y+_2x)'\vee x'\big)\Big)\approx x\vee(x+_2y)	
\end{align*}
and hence
\begin{align*}
	x'\wedge y & \approx x'\wedge\big((y+_2x)+_2x\big)\approx x'\wedge\big((y+_2x)\vee x\big)\wedge\big((y+_2x)'\vee x'\big)\approx \\
	& \approx x'\wedge\big(x\vee(x+_2y)\big)\approx x'\wedge(x\vee y)
\end{align*}
which because of
\begin{align*}
	x' & \approx1\wedge(x'\vee0)\approx(x\vee1)\wedge(x'\vee1')\approx x+_21, \\
	x'' & \approx(x+_21)+_21\approx x
\end{align*}
implies $x\wedge y\approx x''\wedge y\approx x''\wedge(x'\vee y)\approx x\vee(x'\vee y)$ proving $\mathbf L$ to be Boolean according to Theorem~\ref{th3}.
\end{proof}

In \cite{CP} it was shown that a lattice $(L,\vee,\wedge,{}')$ with a unary operation $\,'$ is Boolean if and only if it satisfies the identities
\[
x'\vee(x\wedge y)\approx x'\vee y\text{ and }x'\wedge(x\vee y)\approx x'\wedge y.
\]
In \cite{CLP} single identities are presented that force a lattice with a unary operation to be Boolean. Finally, in \cite{PR} it was proved that a lattice $(L,\vee,\wedge,{}')$ with a unary operation $\,'$ is Boolean if and only if it satisfies the identity
\[
(x\wedge y)\vee(x\wedge y')\approx(x\vee y)\wedge(x\vee y')
\]
(Proposition~4.2.1).

\section{De Morgan's laws}

It is well-known that a lattice $(L,\vee,\wedge,{}')$ with an antitone involution $\,'$ satisfies De Morgan's laws
\[
(x\vee y)'\approx x'\wedge y'\text{ and }(x\wedge y)'\approx x'\wedge y'.
\]
However, De Morgan's laws may be satisfied also in the case that $\,'$ is not an involution.

Now we are interested in conditions under which a lattice with a unary operation $\,'$ satisfies De Morgan's laws. At first, we see that the lattice $\mathbf N_5$ with the complementation defined in Example~\ref{ex2} satisfies De Morgan's laws despite the fact that the complementation $\,'$ is not an involution.

\begin{example}\label{ex1}
The complementation $\,'$ of the non-modular lattices $\mathbf N_5$ and $\mathbf N_5^*$ is antitone, but not an involution, but these lattices still satisfy De Morgan's laws. The complementation $\,'$ of the non-modular lattice $\mathbf O_5$ is antitone and an involution and satisfies De Morgan's laws. The complementation $\,'$ of the non-modular lattice $\mathbf O_6^*$ is not antitone, but it is an involution, and does not satisfy De Morgan's laws.
\end{example}

\begin{theorem}\label{th4}
Let $\mathbf L=(L,\vee,\wedge,{}')$ be a lattice with a unary operation $\,'$. Then the following are equivalent:
\begin{enumerate}[{\rm(i)}]
\item $\mathbf L$ satisfies the identities $(x\vee y)'\approx x'\wedge y'$ and $(x\wedge y)'\approx x'\vee y'$,
\item the operation $\,'$ is antitone and $\mathbf L$ satisfies the identities $(x\vee y)'\wedge x'\approx x'\wedge y'$ and $(x\wedge y)'\vee x'\approx x'\vee y'$.
\end{enumerate}
\end{theorem}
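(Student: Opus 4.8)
The plan is to prove the two implications separately. The direction (i)$\Rightarrow$(ii) is routine: I would first note that De Morgan's laws force $\,'$ to be antitone, since $x\le y$ gives $y'\approx(x\vee y)'\approx x'\wedge y'\le x'$, and then obtain the two identities of (ii) by substituting the De Morgan laws into their left-hand sides, using $(x'\wedge y')\wedge x'\approx x'\wedge y'$ and, dually, $(x'\vee y')\vee x'\approx x'\vee y'$.

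The substance lies in (ii)$\Rightarrow$(i), and here the idea is that antitonicity lets one \emph{absorb} the extra factor in each auxiliary identity. From $x\le x\vee y$ and antitonicity we get $(x\vee y)'\le x'$, so that $(x\vee y)'\wedge x'\approx(x\vee y)'$; comparing this with the hypothesis $(x\vee y)'\wedge x'\approx x'\wedge y'$ yields at once the first De Morgan law $(x\vee y)'\approx x'\wedge y'$. Dually, from $x\wedge y\le x$ we get $x'\le(x\wedge y)'$, so $(x\wedge y)'\vee x'\approx(x\wedge y)'$, and comparison with $(x\wedge y)'\vee x'\approx x'\vee y'$ gives $(x\wedge y)'\approx x'\vee y'$.

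I do not expect a genuine obstacle: once one sees that antitonicity supplies exactly the comparability $(x\vee y)'\le x'$ (resp.\ $x'\le(x\wedge y)'$) needed to make the meet with $x'$ (resp.\ the join with $x'$) absorbing, both De Morgan laws drop out in a single line each. The only point requiring a moment's care is that the two identities in (ii) are stated asymmetrically in $x$ and $y$; but this is immaterial, since the absorption argument uses only the single comparability just mentioned and never the companion version obtained by interchanging $x$ and $y$.
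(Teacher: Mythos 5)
Your proof is correct and takes essentially the same route as the paper: in both, the substance of (ii)$\Rightarrow$(i) is that antitonicity applied to $x\le x\vee y$ gives $(x\vee y)'\le x'$, which combined with the hypothesized identity $(x\vee y)'\wedge x'\approx x'\wedge y'$ yields the De Morgan law, and (i)$\Rightarrow$(ii) is the same routine substitution argument. The only cosmetic differences are that you phrase the key step as an absorption $(x\vee y)'\wedge x'\approx(x\vee y)'$ followed by chaining equalities, while the paper derives the two inequalities $(x\vee y)'\le x'\wedge y'$ and $x'\wedge y'\le(x\vee y)'$ and invokes antisymmetry, and that the paper proves only the join law explicitly, obtaining the meet law by duality rather than writing out the dual argument as you do.
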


\begin{proof}
We prove that the following are equivalent:
\begin{enumerate}
\item[(iii)] $\mathbf L$ satisfies the identity $(x\vee y)'\approx x'\wedge y'$,
\item[(iv)] the operation $\,'$ is antitone and $\mathbf L$ satisfies the identity $(x\vee y)'\wedge x'\approx x'\wedge y'$.
\end{enumerate}
The rest follows by duality. Let $a,b\in L$. \\
(iii) $\Rightarrow$ (iv): \\
If $a\le b$ then $b'=(a\vee b)'=a'\wedge b'\le a'$ showing that $\,'$ is antitone. Moreover, $(a\vee b)'\wedge a'=a'\wedge b'\wedge a'=a'\wedge b'$ proving (iv). \\
(iv) $\Rightarrow$ (iii): \\
Because of $a\le a\vee b$ and $b\le a\vee b$ we have $(a\vee b)'\le a'$ and $(a\vee b)'\le b'$ and hence $(a\vee b)'\le a'\wedge b'$. Conversely, (iv) implies $a'\wedge b'=(a\vee b)'\wedge a'\le(a\vee b)'$ which together with $(a\vee b)'\le a'\wedge b'$ yields $(a\vee b)'=a'\wedge b'$.
\end{proof}

\begin{example}
One can easily check that condition {\rm(ii)} of Theorem~\ref{th4} is satisfied in the lattices $\mathbf N_5$, $\mathbf N_5^*$ and $\mathbf O_6$, but not in the lattice $\mathbf O_6^*$.
\end{example}

%\section{Declarations}

%{\bf Funding} Support of the research of the first author by the Czech Science Foundation (GA\v CR), project 25-20013L, and by IGA, project P\v rF~2025~008, is gratefully acknowledged. Support of the research of the third author by the Austrian Science Fund (FWF), project 10.55776/PIN5424624, is gratefully acknowledged.

%{\bf Availability of data and materials} No datasets were generated or analyzed during the current study.

%{\bf Competing interests} There are no competing interests of a financial or personal nature between the authors.

%{\bf Authors' contributions} All three authors contributed equally to the manuscript.

%{\bf Compliance with Ethical Standards} This article does not contain any studies with human participants or animals performed by any of the authors.

%{\bf Conflict of interest} All three authors declare that they have no conflict of interest.

Authors' addresses:

Ivan Chajda \\
Palack\'y University Olomouc \\
Faculty of Science \\
Department of Algebra and Geometry \\
17.\ listopadu 12 \\
771 46 Olomouc \\
Czech Republic \\
ivan.chajda@upol.cz

Helmut L\"anger \\
TU Wien \\
Faculty of Mathematics and Geoinformation \\
Institute of Discrete Mathematics and Geometry \\
Wiedner Hauptstra\ss e 8-10 \\
1040 Vienna \\
Austria, and \\
Palack\'y University Olomouc \\
Faculty of Science \\
Department of Algebra and Geometry \\
17.\ listopadu 12 \\
771 46 Olomouc \\
Czech Republic \\
helmut.laenger@tuwien.ac.at

\begin{thebibliography}9
\bibitem B
J.~Berman, The structure of free algebras. NATO Sci.\ Ser.\ II Math.\ Phys.\ Chem., {\bf207}. Springer, Dordrecht 2005, 47--76. ISBN 978-1-4020-3816-7.
\bibitem{B35}
G.~Birkhoff, On the structure of abstract algebras. Proc.\ Cambridge Phil.\ Soc.\ {\bf31} (1935), 433--454.
\bibitem{B79}
G.~Birkhoff, Lattice Theory. AMS, Providence, RI, 1979. ISBN 0-8218-1025-1.
\bibitem{CCL}
V.~Cenker, I.~Chajda and H.~L\"anger, The variety of complemented lattices where conjunction and implication form an adjoint pair. J.\ Logic Computation (2025) (10pp.). DOI 10.1093/logcom/exaf024.
\bibitem{CLP}
I.~Chajda, H.~L\"anger and R.~Padmanabhan, Single identities forcing lattices to be Boolean. Math.\ Slovaca {\bf68} (2018), 713--716.
\bibitem{CP}
I.~Chajda and R.~Padmanabhan, Lattices with unique complementation. Acta Sci.\ Math.\ (Szeged) {\bf83} (2017), 31--34.
\bibitem D
R.~P.~Dilworth, Lattices with unique complements. Trans.\ Amer.\ Math.\ Soc.\ {\bf57} (1945), 123--154.
\bibitem{PR}
R.~Padmanabhan and S.~Rudeanu, Axioms for lattices and Boolean algebras. World Scientific, Hackensack, NJ, 2008.
\end{thebibliography}
\end{document}